\numberwithin{equation}{section}
\newtheorem{Prop}[equation]{Proposition}
\newtheorem{Thm}[equation]{Theorem}
\newtheorem{Lem}[equation]{Lemma}
\newtheorem{Cor}[equation]{Corollary}
\theoremstyle{definition}\newtheorem{Def}[equation]{Definition}
\newtheorem{Ex}[equation]{Example}
\newtheorem{Rem}[equation]{Remark}
\theoremstyle{definition}\newtheorem{Ques}[equation]{Question}
\newcommand{\Q}{\mathbb{Q}}
\newcommand{\Z}{\mathbb{Z}}
\newcommand{\Int}{\textnormal{Int}}
\newcommand{\bfi}{\mathbf{i}}
\newcommand{\bfj}{\mathbf{j}}
\newcommand{\bfk}{\mathbf{k}}
\newcommand{\mcE}{\mathcal{E}}
\newcommand{\mcP}{\mathcal{P}}
\newcommand{\mcan}{\mathcal{A}_n}
\newcommand{\Ln}{\Lambda_n}
\newcommand{\Linf}{\Lambda_\infty}
\newcommand{\olK}{\overline{K}}
\newcommand{\subsetUP}{\rotatebox{90}{$\subseteq$}}
\title{Integral closure of rings of integer-valued polynomials on algebras}
\date{\today}
\author{Giulio Peruginelli\footnote{Institut f\"ur Analysis und Comput. Number Theory, Technische Univ., Steyrergasse 30, A-8010 Graz, Austria. E-mail: peruginelli@math.tugraz.at.}\\
Nicholas J. Werner\footnote{Department of Mathematics, The Ohio State University at Newark, Ohio, USA. Email: nwerner@newark.osu.edu.}}
\begin{document}


%

\maketitle

\begin{abstract}
\noindent 
Let $D$ be an integrally closed domain with quotient field $K$. Let $A$ be a torsion-free $D$-algebra that is finitely generated as a $D$-module. For every $a$ in $A$ we consider its minimal polynomial $\mu_a(X)\in D[X]$, i.e. the monic polynomial of least degree such that $\mu_a(a)=0$. The ring $\Int_K(A)$ consists of polynomials in $K[X]$ that send elements of $A$ back to $A$ under evaluation. 
If $D$ has finite residue rings, we show that the integral closure of $\Int_K(A)$ is the ring of polynomials in $K[X]$ which map the roots in an algebraic closure of $K$ of all the $\mu_a(X)$, $a\in A$, into elements that are integral over $D$. The result is obtained by identifying $A$ with a $D$-subalgebra of the matrix algebra $M_n(K)$ for some $n$ and then considering polynomials which map a matrix to a matrix integral over $D$. We also obtain information about polynomially dense subsets of these rings of polynomials. 

\vspace{0.5cm}

\noindent \textbf{Keywords}:
Integer-valued polynomial, matrix, triangular matrix, integral closure, pullback, polynomially dense. MSC Primary: 13F20 Secondary: 13B25, 13B22 11C20.
\end{abstract}

\section{Introduction}

Let $D$ be a (commutative) integral domain with quotient field $K$. The ring $\Int(D)$ of integer-valued polynomials on $D$ consists of polynomials in $K[X]$ that map elements of $D$ back to $D$. More generally, if $E \subseteq K$, then one may define the ring $\Int(E, D)$ of polynomials that map elements of $E$ into $D$. 

One focus of recent research (\cite{EvFaJoh}, \cite{Frisch}, \cite{LopWer}, \cite{Per1}, \cite{Per2}) has been to generalize the notion of integer-valued polynomial to $D$-algebras. When $A \supseteq D$ is a torsion-free module finite $D$-algebra we define $\Int_K(A) := \{f \in K[X] \mid f(A) \subseteq A\}$. The set $\Int_K(A)$ forms a commutative ring. 
If we assume that $K \cap A = D$, then $\Int_K(A)$ is contained  in $\Int(D)$ (these two facts are indeed equivalent),
and often $\Int_K(A)$ shares properties similar to those of $\Int(D)$ (see the references above, especially \cite{Frisch}). 

When $A = M_n(D)$, the ring of $n \times n$ matrices with entries in $D$, $\Int_K(A)$ has proven to be particularly amenable to investigation. For instance, \cite[Thm.\ 4.6]{LopWer} shows that the integral closure of $\Int_\Q(M_n(\Z))$ is $\Int_\Q(\mcan)$, where $\mcan$ is the set of algebraic integers of degree at most $n$, and $\Int_\Q(\mcan) = \{ f \in \Q[X] \mid f(\mcan) \subseteq \mcan\}$. In this paper, we will generalize this theorem and describe the integral closure of $\Int_K(A)$ when $D$ is an integrally closed domain with finite residue rings. Our description (Theorem \ref{Integral closure of Int(A)}) may be considered an extension of both \cite[Thm.\ 4.6]{LopWer} and \cite[Thm.\ IV.4.7]{CaCh} (the latter originally proved in \cite[Prop. 2.2]{GHL}), which states that if $D$ is Noetherian and $D'$ is its integral closure in $K$,  then the integral closure of $\Int(D)$ equals $\Int(D,D')=\{f\in K[X] \mid f(D)\subset D'\}$.

Key to our work will be rings of polynomials that we dub \textit{integral-valued polynomials}, and which act on certain subsets of $M_n(K)$. Let $\olK$ be an algebraic closure of $K$.  We will establish a close connection between integral-valued polynomials and polynomials that act on elements of $\olK$ that are integral over $D$. We will also investigate polynomially dense subsets of rings of integral-valued polynomials.

In Section \ref{Integral-valued}, we define what we mean by integral-valued polynomials, discuss when sets of such polynomials form a ring, and connect them to the integral elements of $\olK$. In Section \ref{Algebras}, we apply the results of Section \ref{Integral-valued} to $\Int_K(A)$ and prove the aforementioned theorem about its integral closure. Section \ref{Matrix rings} covers polynomially dense subsets of rings of integral-valued polynomials. We close by posing several problems for further research.

\section{Integral-valued polynomials}\label{Integral-valued}

Throughout, we assume that $D$ is an integrally closed domain with quotient field $K$. We denote by $\olK$ a fixed algebraic closure of $K$. When working in $M_n(K)$, we associate $K$ with the scalar matrices, so that we may consider $K$ (and $D$) to be subsets of $M_n(K)$.

For each matrix $M \in M_n(K)$, we let $\mu_M(X) \in K[X]$ denote the minimal polynomial of $M$, which is the monic generator of $N_{K[X]}(M) = \{f \in K[X] \mid f(M) = 0\}$, called the null ideal of $M$. We define $\Omega_M$ to be the set of eigenvalues of $M$ considered as a matrix in $M_n(\olK)$, which are the roots of $\mu_M$ in $\olK$. For a subset $S \subseteq M_n(K)$, we define $\Omega_S := \bigcup_{M \in S} \Omega_M$. Note that a matrix in $M_n(K)$ may have minimal polynomial in $D[X]$ even though the matrix itself is not in $M_n(D)$. A simple example is given by 
$\big(\begin{smallmatrix}
0 & q\\ 0 & 0
\end{smallmatrix}\big)
\in M_2(K)$, where $q \in K \setminus D$.

\begin{Def}\label{Integral}
We say that $M \in M_n(K)$ is \textit{integral over D} (or just \textit{integral}, or is an \textit{integral matrix}) if $M$ solves a monic polynomial in $D[X]$. A subset $S$ of $M_n(K)$ is said to be \textit{integral} if each $M \in S$ is integral over $D$.
\end{Def}

Our first lemma gives equivalent definitions for a matrix to be integral.

\begin{Lem}\label{Integral conditions}
Let $M \in M_n(K)$. The following are equivalent:
\begin{enumerate}[(i)]
\item $M$ is integral over $D$
\item $\mu_M \in D[X]$
\item each $\alpha \in \Omega_M$ is integral over $D$
\end{enumerate}
\end{Lem}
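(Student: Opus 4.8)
The plan is to prove the three conditions equivalent by establishing the cycle of implications $(ii)\Rightarrow(i)\Rightarrow(iii)\Rightarrow(ii)$, isolating the single place where the integral-closedness of $D$ genuinely enters.

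First, $(ii)\Rightarrow(i)$ is immediate: the minimal polynomial $\mu_M$ is monic by definition and satisfies $\mu_M(M)=0$, so if $\mu_M\in D[X]$ then $M$ already solves a monic polynomial over $D$. Next, for $(i)\Rightarrow(iii)$, suppose $f\in D[X]$ is monic with $f(M)=0$. Each eigenvalue $\alpha\in\Omega_M$ is a root of $\mu_M$, and since $f(M)=0$ forces $\mu_M\mid f$ in $K[X]$, we get $f(\alpha)=0$; thus every $\alpha\in\Omega_M$ is a root of a monic polynomial in $D[X]$, hence integral over $D$. (Equivalently, one argues directly that an eigenvalue is a root of any polynomial annihilating $M$: if $Mv=\alpha v$ with $v\neq0$, then $f(\alpha)v=f(M)v=0$.)

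The substance of the lemma lies in $(iii)\Rightarrow(ii)$, and this is where I expect the only real work — and the only use of the hypothesis on $D$. Over $\olK$ the monic polynomial $\mu_M$ splits as $\mu_M(X)=\prod_i (X-\alpha_i)$, where the $\alpha_i$ are the eigenvalues listed with their multiplicities in $\mu_M$; each $\alpha_i$ lies in $\Omega_M$ and so is integral over $D$ by hypothesis. The coefficients of $\mu_M$ are, up to sign, the elementary symmetric functions of the $\alpha_i$, and since the elements of $\olK$ that are integral over $D$ form a ring, these coefficients are integral over $D$ as well. But the coefficients also lie in $K$, because $\mu_M\in K[X]$. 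As $D$ is integrally closed in $K$, any element of $K$ that is integral over $D$ belongs to $D$; hence every coefficient of $\mu_M$ lies in $D$, that is, $\mu_M\in D[X]$.

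The main obstacle — really the crux — is this last implication: it reduces to the standard fact that symmetric functions of integral elements are integral, combined with the integral-closedness of $D$, which is exactly what upgrades ``integral and lying in $K$'' to ``lying in $D$.'' The first two implications are purely formal and require no assumption on $D$.
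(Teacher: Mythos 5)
Your proof is correct and follows essentially the same route as the paper: the same cycle of implications, with $(i)\Rightarrow(iii)$ via divisibility of the annihilating polynomial by $\mu_M$, and $(iii)\Rightarrow(ii)$ via elementary symmetric functions of the roots together with the integral-closedness of $D$. The only cosmetic difference is that you spell out $(ii)\Rightarrow(i)$, which the paper dismisses as obvious.
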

\begin{proof}
$(i) \Rightarrow (iii)$ 
Suppose $M$ solves a monic polynomial $f(X)$ with coefficients in $D$. As $\mu_M(X)$ divides $f(X)$, its roots are then also roots of  $f(X)$. Hence, the elements of $\Omega_M$ are integral over $D$.

$(iii) \Rightarrow (ii)$ The coefficients of $\mu_M \in K[X]$ are the elementary symmetric functions of its roots. Assuming $(iii)$ holds, these roots are integral over $D$, hence the coefficients of $\mu_M$ are integral over $D$. Since $D$ is integrally closed, we must have $\mu_M \in D[X]$.

$(ii) \Rightarrow (i)$ Obvious.
%
%
\end{proof}

For the rest of this section, we will study polynomials in $K[X]$ that take values on sets of integral matrices. These are the integral-valued polynomials mentioned in the introduction.

\begin{Def}\label{Integral-valued polynomials}
Let $S \subseteq M_n(K)$. Let $K[S]$ denote the $K$-subalgebra of $M_n(K)$ generated by $K$ and the elements of $S$. Define $S' := \{M \in K[S] \mid M \text{ is integral}\}$ and $\Int_K(S, S') := \{f \in K[X] \mid f(S) \subseteq S'\}$. We call $\Int_K(S, S')$ a set of \textit{integral-valued polynomials}.
\end{Def}

\begin{Rem}
In the next lemma, we will prove that forming the set $S'$ is a closure operation in the sense that $(S')' = S'$. We point out that this construction differs from the usual notion of integral closure in several ways. First, if $S$ itself is not integral, then $S \not\subseteq S'$. Second, $S'$ need not have a ring structure. Indeed, if $D = \Z$ and $S = M_2(\Z)$, then both 
$\big(\begin{smallmatrix}
1 & 0\\ 0 & 0
\end{smallmatrix}\big)$ 
and 
$\big(\begin{smallmatrix}
1/2 \; & 1/2 \\ 1/2 \; & 1/2
\end{smallmatrix}\big)$ 
are in $S'$, but neither their sum nor their product is integral. Lastly, even if $S$ is a commutative ring then $S'$ need not be the same as the integral closure of $S$ in $K[S]$, because we insist that the elements of $S$ satisfy a monic polynomial in $D[X]$ rather than $S[X]$.

However, if $S$ is a commutative $D$-algebra and it is an integral subset of $M_n(K)$ then $S'$ is equal to the integral closure of $S$ in $K[S]$ (see Corollary 1 to Proposition 2 and Proposition 6 of \cite[Chapt. V]{Bourbaki}).
\end{Rem}

\begin{Lem}\label{(S')' = S'}
Let $S \subseteq M_n(K)$. Then, $(S')' = S'$.
\end{Lem}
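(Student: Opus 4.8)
The plan is to establish the two inclusions $S'\subseteq (S')'$ and $(S')'\subseteq S'$, and the entire argument will rest on a single structural observation about the generated subalgebras: that $K[S']\subseteq K[S]$.

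First I would record this key containment. By the definition of $S'$, every element of $S'$ already lies in $K[S]$. Since $K[S]$ is itself a $K$-subalgebra of $M_n(K)$ that contains $S'$, minimality of the generated subalgebra forces $K[S']\subseteq K[S]$. This is the only place where the precise set-up of the construction enters, and although it carries the proof it is really a matter of unwinding definitions rather than a genuine obstacle. Throughout I would keep in mind that, by definition, for any subset $T\subseteq M_n(K)$ one has $T'=\{M\in K[T]\mid M\text{ is integral}\}$; in particular $(S')'=\{M\in K[S']\mid M\text{ is integral}\}$.

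For the inclusion $S'\subseteq(S')'$, I would take $M\in S'$. Then $M$ is integral by definition, and $M$ lies in $K[S']$ since it is one of the generators of that subalgebra; hence $M\in(S')'$. For the reverse inclusion $(S')'\subseteq S'$, I would take $M\in(S')'$, so that $M$ is integral and $M\in K[S']$. The key containment gives $M\in K[S]$, and being an integral element of $K[S]$ is precisely the condition $M\in S'$. Combining the two inclusions yields $(S')'=S'$.
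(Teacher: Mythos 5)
Your proof is correct, and it follows the same basic strategy as the paper's: everything reduces to comparing the generated algebras $K[S']$ and $K[S]$. The one genuine difference is that you get by with only the trivial inclusion $K[S']\subseteq K[S]$, whereas the paper proves the full equality $K[S']=K[S]$; the nontrivial direction $K[S]\subseteq K[S']$ is obtained there by clearing denominators (for $M\in K[S]$ one picks $d\in D$ killing all denominators of the entries of $M$, so that $dM\in S'$ and hence $M=\tfrac{1}{d}(dM)\in K[S']$). Your observation that this direction is not actually needed --- since $S'\subseteq(S')'$ follows directly from $S'\subseteq K[S']$ --- is a small but legitimate simplification; the paper's stronger statement $K[S']=K[S]$ costs one extra step and records a fact of independent use, but either route yields the lemma.
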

\begin{proof}
We just need to show that $K[S'] = K[S]$. By definition, $S' \subseteq K[S]$, so $K[S'] \subseteq K[S]$. For the other containment, let $M \in K[S]$. Let $d \in D$ be a common multiple for all the denominators of the entries in $M$. Then, $d M \in S' \subseteq K[S']$. Since $1/d \in K$, we get $M \in K[S']$.
\end{proof}

An integral subset of $M_n(K)$ need not be closed under addition or multiplication, so at first glance it may not be clear that $\Int_K(S, S')$ is closed under these operations. As we now show, $\Int_K(S, S')$ is in fact a ring.

\begin{Prop}\label{Int(S, olS) is a ring}
Let $S \subseteq M_n(K)$. Then, $\Int_K(S, S')$ is a ring, and if $D \subseteq S$, then $\Int_K(S, S') \subseteq \Int(D)$.
\end{Prop}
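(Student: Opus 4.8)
The plan is to prove the two assertions separately, beginning with the ring structure. The central difficulty is that $S'$ is generally not closed under addition or multiplication (as the preceding remark emphasizes), so I cannot argue naively that $f(M) + g(M)$ or $f(M)g(M)$ lands in $S'$ merely because $f(M)$ and $g(M)$ do. The key observation that rescues the argument is that for a \emph{fixed} matrix $M$, all the values $f(M)$ for $f \in K[X]$ lie in the commutative subring $K[M] \subseteq K[S]$, and within this single commutative ring the integral elements \emph{do} form a ring.

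Concretely, I would first fix $M \in S$ and let $f, g \in \Int_K(S, S')$. Then $f(M)$ and $g(M)$ are both integral matrices lying in the commutative ring $K[M]$. I would invoke the standard fact that the set of elements of a commutative ring that are integral over $D$ is itself a ring (i.e. closed under sums, differences, and products) — this is exactly the classical integral-closure result cited in the remark via \cite[Chapt. V]{Bourbaki}. Hence $(f+g)(M) = f(M) + g(M)$ and $(fg)(M) = f(M)g(M)$ are integral, and they obviously still lie in $K[M] \subseteq K[S]$, so they belong to $S'$. Since this holds for every $M \in S$, both $f+g$ and $fg$ send $S$ into $S'$, giving $\Int_K(S,S') \subseteq K[X]$ closed under the ring operations. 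One also checks that the constant polynomials $0$ and $1$ lie in $\Int_K(S,S')$, so it is indeed a subring of $K[X]$.

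For the second statement, I would assume $D \subseteq S$ and take any $f \in \Int_K(S, S')$. Since the scalar matrices $D$ are contained in $S$, for each $d \in D$ the value $f(d)$ (interpreted as a scalar matrix) lies in $S'$, meaning the scalar matrix $f(d)$ is integral over $D$. But a scalar matrix $f(d) = f(d) \cdot I_n$ is integral over $D$ precisely when the scalar $f(d) \in K$ is integral over $D$; equivalently, by Lemma \ref{Integral conditions} its single eigenvalue $f(d)$ is integral over $D$. As $D$ is integrally closed in $K$, this forces $f(d) \in D$. Since $d \in D$ was arbitrary, $f$ maps $D$ into $D$, i.e. $f \in \Int(D)$.

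The main obstacle, then, is not any long computation but the correct conceptual move in the first part: recognizing that one must work one matrix at a time inside the commutative ring $K[M]$ rather than trying to argue globally in the noncommutative algebra $K[S]$, where the integral elements genuinely fail to form a ring. Once the problem is localized to a fixed $M$, the closure of $S'$ under the ring operations \emph{restricted to the image of evaluation at $M$} follows immediately from the classical theory of integral elements in commutative rings.
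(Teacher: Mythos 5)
Your proof is correct and follows essentially the same route as the paper: both arguments fix a single $M \in S$, observe that $f(M)$ and $g(M)$ live in a commutative setting (you phrase this as working inside $K[M]$; the paper cites Bourbaki for the fact that the $D$-algebra generated by two commuting integral elements is integral), conclude that $f(M)+g(M)$ and $f(M)g(M)$ are integral elements of $K[S]$ and hence lie in $S'$, and then handle the containment in $\Int(D)$ by evaluating at scalars and using that $D$ is integrally closed. No gaps.
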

\begin{proof}
Let $M \in S$ and $f, g \in \Int_K(S, S')$. Then, $f(M), g(M)$ are integral over $D$. By Corollary 2 after Proposition 4 of \cite[Chap.\ V]{Bourbaki}, the $D$-algebra generated by $f(M)$ and $g(M)$ is integral over $D$. So, $f(M)+g(M)$ and $f(M)g(M)$ are both integral over $D$ and are both in $K[S]$. Thus, $f(M)+g(M), f(M)g(M) \in S'$ and $f+g, fg \in \Int_K(S, S')$. 
Assuming $D \subseteq S$, let $f \in \Int_K(S, S')$ and $d \in D$. Then, $f(d)$ is an integral element of $K$. Since $D$ is integrally closed, $f(d) \in D$. Thus, $\Int_K(S, S') \subseteq \Int(D)$.
\end{proof}

We now begin to connect our rings of integral-valued polynomials to rings of polynomials that act on elements of $\olK$ that are integral over $D$. For each $n > 0$, let 
\begin{equation*}
\Ln := \{\alpha \in \olK \mid \alpha \text{ solves a monic polynomial in } D[X] \text{ of degree } n\}
\end{equation*}
In the special case $D = \Z$, we let $\mcan := \Lambda_n = \{\text{ algebraic integers of degree at most $n$ }\}\subset\overline{\Q}$.

For any subset $\mcE$ of $\Ln$, define 
\begin{equation*}
\Int_K(\mcE, \Ln) := \{ f \in K[X] \mid f(\mcE) \subseteq \Ln\}
\end{equation*}
to be the set of polynomials in $K[X]$ mapping elements of $\mcE$ into $\Ln$. If $\mcE = \Ln$, then we write simply $\Int_K(\Ln)$. As with $\Int_K(S, S')$, $\Int_K(\mcE, \Ln)$ is a ring despite the fact that $\Ln$ is not.

\begin{Prop}\label{Algebraic integers}
For any $\mcE \subseteq \Ln$, $\Int_K(\mcE, \Ln)$ is a ring, and is integrally closed.
\end{Prop}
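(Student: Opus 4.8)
The plan is to treat the two assertions separately, in both cases exploiting the observation that for a fixed $\alpha \in \mcE$ every value $f(\alpha)$ lies in the single field $K(\alpha)$, whose degree over $K$ is at most $n$ because $\alpha \in \Ln$. This is what keeps degrees under control: a priori the sum of two elements of $\Ln$ could have degree as large as $n^2$, and it is only the fact that our values are polynomial expressions in one element $\alpha$ that prevents this.

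For the ring structure, I would first record the characterization that $\Ln$ is precisely the set of elements of $\olK$ that are integral over $D$ and of degree at most $n$ over $K$. One inclusion is immediate; the other uses that $D$ is integrally closed, so that the minimal polynomial over $K$ of an integral element already lies in $D[X]$ (as in Lemma \ref{Integral conditions}) and can be padded by a power of $X$ to a monic polynomial of degree exactly $n$ in $D[X]$. Granting this, fix $\alpha \in \mcE$ and $f, g \in \Int_K(\mcE, \Ln)$. Then $f(\alpha), g(\alpha)$ are integral over $D$, so their sum and product are integral over $D$ as well, since the $D$-subalgebra they generate is module-finite over $D$ (exactly the argument used in the proof of Proposition \ref{Int(S, olS) is a ring}); moreover both values lie in $K(\alpha)$ and hence have degree at most $n$ over $K$. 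By the characterization, $f(\alpha) \pm g(\alpha)$ and $f(\alpha)g(\alpha)$ lie in $\Ln$, and as $\alpha$ was arbitrary we get $f \pm g, fg \in \Int_K(\mcE, \Ln)$. Since the constants $0$ and $1$ clearly send $\mcE$ into $\Ln$, the set is a subring of $K[X]$.

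For integral closedness, let $\phi$ be an element of the quotient field of $R := \Int_K(\mcE, \Ln)$ that is integral over $R$. Because $R \subseteq K[X]$ and $K[X]$ is integrally closed in $K(X)$, the element $\phi$ already lies in $K[X]$, and it remains only to verify $\phi(\mcE) \subseteq \Ln$. Writing a monic integral dependence $\phi^m + g_{m-1}\phi^{m-1} + \cdots + g_0 = 0$ with $g_i \in R$ and evaluating at a point $\alpha \in \mcE$ gives $\phi(\alpha)^m + g_{m-1}(\alpha)\phi(\alpha)^{m-1} + \cdots + g_0(\alpha) = 0$. Each $g_i(\alpha) \in \Ln$ is integral over $D$, so this relation exhibits $\phi(\alpha)$ as integral over $D[g_0(\alpha), \ldots, g_{m-1}(\alpha)]$, a ring that is itself integral over $D$; by transitivity of integrality $\phi(\alpha)$ is integral over $D$. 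Finally $\phi(\alpha) \in K(\alpha)$ has degree at most $n$ over $K$, so $\phi(\alpha) \in \Ln$ by the characterization, whence $\phi \in R$.

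The routine ingredients here (transitivity of integrality and the integral closedness of $K[X]$) are standard, so the main obstacle is the degree control. The essential point, which I would emphasize, is that $f(\alpha)$ and $\phi(\alpha)$ are polynomial expressions in the single element $\alpha$ and therefore never leave $K(\alpha)$; this is exactly what prevents the degree from exceeding $n$ under addition and multiplication, and it is the feature that makes $\Int_K(\mcE, \Ln)$ a well-behaved ring even though $\Ln$ is closed under neither operation.
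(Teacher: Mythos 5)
Your proof is correct, but it takes a more self-contained route than the paper. The paper introduces $\Linf$, the integral closure of $D$ in $\olK$, notes that $\Int_{\olK}(\mcE,\Linf)$ is a ring and is integrally closed by citing \cite[Prop.\ IV.4.1]{CaCh}, and then obtains both assertions at once by writing $\Int_K(\mcE,\Ln)=\Int_K(\mcE,\Linf)=\Int_{\olK}(\mcE,\Linf)\cap K[X]$, a contraction of an integrally closed ring to $K[X]$. The hinge of that identification is exactly the degree bound you isolate, $[K(f(\alpha)):K]\le[K(\alpha):K]\le n$; so the one genuinely essential idea is common to both arguments. What you do differently is to verify everything by hand: you characterize $\Ln$ as the integral elements of degree at most $n$ (correctly using that $D$ is integrally closed to put the minimal polynomial in $D[X]$ and pad it to degree exactly $n$), check closure under sum and product directly, and prove integral closedness by evaluating a monic dependence relation at each $\alpha\in\mcE$ and invoking transitivity of integrality together with the integral closedness of $K[X]$ in $K(X)$. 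Your route buys independence from the cited result at the cost of a longer argument; the paper's route buys brevity by delegating the integral-closedness of $\Int_{\olK}(\mcE,\Linf)$ and the stability of integral closedness under contraction to standard references. One small point worth making explicit in your version: the element $\phi$ integral over $R$ lies in the quotient field of $R$, which is contained in $K(X)$ because $R\subseteq K[X]$; this is what licenses the step $\phi\in K[X]$.
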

\begin{proof}
Let $\Linf$ be the integral closure of $D$ in $\olK$. We set $\Int_{\olK}(\mcE, \Linf) = \{f \in \olK[X] \mid f(\mcE) \subseteq \Linf\}$. Then, $\Int_{\olK}(\mcE, \Linf)$ is a ring, and by \cite[Prop.\ IV.4.1]{CaCh} it is integrally closed.

Let $\Int_K(\mcE, \Linf) = \{f \in K[X] \mid f(\mcE) \subseteq \Linf\}$. Clearly, $\Int_K(\mcE, \Ln) \subseteq \Int_K(\mcE, \Linf)$. However, if $\alpha \in \mcE$ and $f \in K[X]$, then $[K(f(\alpha)) : K] \leq [K(\alpha) : K] \leq n$, so in fact $\Int_K(\mcE, \Ln) = \Int_K(\mcE, \Linf)$. Finally, since $\Int_K(\mcE, \Linf) = \Int_{\olK}(\mcE, \Linf) \cap K[X]$ is the contraction of $\Int_{\olK}(\mcE, \Linf)$ to $K[X]$, it is an integrally closed ring, proving the proposition.
\end{proof}

Theorem 4.6 in \cite{LopWer} shows that the integral closure of $\Int_\Q(M_n(\Z))$ equals the ring $\Int_\Q(\mcan)$. As we shall see (Theorem \ref{Equality of int rings}), this is evidence of a broader connection between the rings of integral-valued polynomials $\Int_K(S, S')$ and rings of polynomials that act on elements of $\Ln$. The key to this connection is the observation contained in Lemma \ref{Integral conditions} that the eigenvalues of an integral matrix in $M_n(K)$ lie in $\Ln$ and also the well known fact that if $M \in M_n(K)$ and $f \in K[X]$, then the eigenvalues of $f(M)$ are exactly $f(\alpha)$, where $\alpha$ is an eigenvalue of $M$. More precisely, if $\chi_M(X)=\prod_{i=1,\ldots,n}(X-\alpha_i)$ is the characteristic polynomial of $M$ (the roots $\alpha_i$ are in $\olK$ and there may be repetitions), then the characteristic polynomial of $f(M)$ is $\chi_{f(M)}(X)=\prod_{i=1,\ldots,n}(X-f(\alpha_i))$. Phrased in terms of our $\Omega$-notation, we have:
\begin{equation}\label{Eigenvalues}
\text{if $M \in M_n(K)$ and $f \in K[X]$, then $\Omega_{f(M)} = f(\Omega_M) = \{f(\alpha) \mid \alpha \in \Omega_M \}$}.
\end{equation}

Using this fact and our previous work, we can equate $\Int_K(S, S')$ with a ring of the form $\Int_K(\mcE, \Ln)$.

\begin{Thm}\label{Equality of int rings}
Let $S \subseteq M_n(K)$. Then, $\Int_K(S, S') = \Int_K(\Omega_S, \Ln)$, and in particular $\Int_K(S, S')$ is integrally closed.
\end{Thm}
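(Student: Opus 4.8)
The plan is to prove the set equality element-by-element, translating membership of $f(M)$ in $S'$ into a condition on eigenvalues via Lemma \ref{Integral conditions} and the eigenvalue identity \eqref{Eigenvalues}, and then using a degree bound to pass between ``integral over $D$'' and ``lies in $\Ln$''. First I would observe that the $K[S]$-constraint built into the definition of $S'$ is automatically satisfied: for any $f \in K[X]$ and any $M \in S$ we have $f(M) \in K[M] \subseteq K[S]$. Hence $f \in \Int_K(S, S')$ if and only if $f(M)$ is integral over $D$ for every $M \in S$.

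Next, fixing $M \in S$, I would apply Lemma \ref{Integral conditions}: the matrix $f(M)$ is integral over $D$ if and only if every element of $\Omega_{f(M)}$ is integral over $D$. By \eqref{Eigenvalues} we have $\Omega_{f(M)} = f(\Omega_M)$, so $f(M)$ is integral precisely when $f(\alpha)$ is integral over $D$ for each $\alpha \in \Omega_M$. Ranging over all $M \in S$ and recalling $\Omega_S = \bigcup_{M \in S} \Omega_M$, this shows that $f \in \Int_K(S, S')$ if and only if $f(\alpha)$ is integral over $D$ for every $\alpha \in \Omega_S$.

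The remaining step bridges integrality to membership in $\Ln$ by controlling degrees. Each $\alpha \in \Omega_S$ is a root of the characteristic polynomial of some matrix in $M_n(K)$, so $[K(\alpha) : K] \leq n$, and therefore $[K(f(\alpha)) : K] \leq [K(\alpha):K] \leq n$. Since $D$ is integrally closed, an element of $\olK$ of degree at most $n$ over $K$ lies in $\Ln$ exactly when it is integral over $D$: its minimal polynomial then lies in $D[X]$ and, having degree at most $n$, can be multiplied by a suitable power of $X$ to produce a monic polynomial in $D[X]$ of degree $n$ vanishing at the element. Thus $f(\alpha)$ is integral over $D$ if and only if $f(\alpha) \in \Ln$, which yields $f \in \Int_K(S, S')$ if and only if $f \in \Int_K(\Omega_S, \Ln)$, the desired equality. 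The concluding assertion that $\Int_K(S,S')$ is integrally closed then follows from Proposition \ref{Algebraic integers} applied to $\mcE = \Omega_S$.

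The main obstacle is bookkeeping around $\Ln$ rather than any deep difficulty: one must notice that $\Omega_S$ need not be an integral subset of $\olK$ (so in general $\Omega_S \not\subseteq \Ln$), yet the ring $\Int_K(\Omega_S, \Ln)$ is still meaningful, and that the degree bound $[K(\alpha):K]\le n$ is exactly what makes the equivalence ``$f(\alpha)$ integral $\Leftrightarrow$ $f(\alpha) \in \Ln$'' valid. Relatedly, one should check that the proof of Proposition \ref{Algebraic integers} uses only that the evaluation set consists of elements of degree at most $n$ over $K$—a property $\Omega_S$ enjoys—so that it applies here even though its literal hypothesis reads $\mcE \subseteq \Ln$.
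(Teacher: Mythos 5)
Your proof is correct and follows essentially the same route as the paper's: translate integrality of $f(M)$ into the condition $f(\Omega_M) \subseteq \Ln$ via Lemma \ref{Integral conditions} and \eqref{Eigenvalues}, then invoke Proposition \ref{Algebraic integers} for integral closedness. Your explicit observation that $\Omega_S$ need not lie in $\Ln$, but that the proof of Proposition \ref{Algebraic integers} only uses the degree bound $[K(\alpha):K]\le n$ (which eigenvalues of $n\times n$ matrices satisfy), addresses a point the paper leaves implicit.
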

\begin{proof}
We first prove this for $S = \{M\}$. Using Lemma \ref{Integral conditions} and (\ref{Eigenvalues}), for each $f \in K[X]$ we have:
\begin{equation*}
f(M) \in S' \iff f(M) \text{ is integral } \iff \Omega_{f(M)} \subseteq \Ln \iff f(\Omega_M) \subseteq \Ln.
\end{equation*}
This proves that $\Int_K(\{M\}, \{M\}') = \Int_K(\Omega_{M}, \Ln)$. For a general subset $S$ of $M_n(K)$, we have
\begin{equation*}
\Int_K(S, S') = \bigcap_{M \in S} \Int_K(\{M\}, S') = \bigcap_{M \in S} \Int_K(\Omega_M, \Ln) = \Int_K(\Omega_S, \Ln).
\end{equation*} 
\end{proof}

The above proof shows that if a polynomial is integral-valued on a matrix, then it is also integral-valued on any other matrix with the same set of eigenvalues. 
Note that for a single integral matrix $M$ we have these inclusions:
$$D\subset D[M]\subseteq\{M\}'\subset K[M].$$
Moreover, $\{M\}'$ is equal to the integral closure of $D[M]$ in $K[M]$ (because $D[M]$ is a commutative algebra).

\section{The case of a $D$-algebra}\label{Algebras}

We now use the results from Section \ref{Integral-valued} to gain information about $\Int_K(A)$, where $A$ is a $D$-algebra. In Theorem \ref{Integral closure of Int(A)} below, we shall obtain a description of the integral closure of $\Int_K(A)$.

As mentioned in the introduction, we assume that $A$ is a torsion-free $D$-algebra that is finitely generated as a $D$-module. Let $B = A \otimes_D K$ be the extension of $A$ to a $K$-algebra. Since $A$ is a faithful $D$-module, $B$ contains copies of $D$, $A$, and $K$. Furthermore, $K$ is contained in the center of $B$, so we can evaluate polynomials in $K[X]$ at elements of $B$ and define
\begin{equation*}
\Int_K(A) := \{f \in K[X] \mid f(A) \subseteq A\}.
\end{equation*}

Letting $n$ be the vector space dimension of $B$ over $K$, we also have an embedding $B \hookrightarrow M_n(K)$, $b\mapsto M_b$. More precisely, we may embed $B$ into the ring of $K$-linear endomorphisms of $B$ (which is isomorphic to $M_n(K)$) via the map $B \hookrightarrow \text{End}_K(B)$ sending $b \in B$ to the endomorphism $x \mapsto b\cdot x$. Consequently, starting with just $D$ and $A$, we obtain a representation of $A$ as a $D$-subalgebra of $M_n(K)$. Note that $n$ may be less than the minimum number of generators of $A$ as a $D$-module.

In light of the aforementioned matrix representation of $B$, several of the definitions and notations we defined in Section \ref{Integral-valued} will carry over to $B$. Since the concepts of minimal polynomial and eigenvalue are independent of the representation $B \hookrightarrow M_n(K)$, the following are well-defined:
\begin{itemize}
\item for all $b \in B$, $\mu_b(X) \in K[X]$ is the minimal polynomial of $b$. So, $\mu_b(X)$ is the monic polynomial of minimal degree in $K[X]$ that kills $b$. Equivalently, $\mu_b$ is the monic generator of the null ideal $N_{K[X]}(b)$ of $b$. This is the same as the minimal polynomial of $M_b\in M_n(K)$, since for all $f\in K[X]$ we have $f(M_b)=M_{f(b)}$. To ease the notation, from now on we will identify $b$ with $M_b$.
\item by the Cayley-Hamilton Theorem, $\deg(\mu_b)\leq n$, for all $b\in B$.
\item for all $b \in B$, $\Omega_b = \{\text{roots of } \mu_b \text{ in } \olK\}$. The elements of $\Omega_b$ are nothing else than the eigenvalues of $b$ under any matrix representation $B \hookrightarrow M_n(K)$. If $S \subseteq B$, then $\Omega_S = \bigcup_{b \in S} \Omega_b$.
\item $b \in B$ is \textit{integral over D} (or just \textit{integral}) if $b$ solves a monic polynomial in $D[X]$.
\item $B = K[A]$, since $B$ is formed by extension of scalars from $D$ to $K$.
\item $A' = \{b \in B \mid b \text{ is integral}\}$. 
By \cite[Theorem 1, Chapt. V]{Bourbaki} $A\subseteq A'$. In particular, this implies $A\cap K=D$ (because $D$ is integrally closed), so that $\Int_K(A)\subseteq\Int(D)$ (see the remarks in the introduction).
\item $\Int_K(A, A') = \{f \in K[X] \mid f(A) \subseteq A'\}$.
\end{itemize}

Working exactly as in Proposition \ref{Int(S, olS) is a ring}, we find that $\Int_K(A, A')$ is another ring of integral-valued polynomials. Additionally, Lemma \ref{Integral conditions} and (\ref{Eigenvalues}) hold for elements of $B$. 
Consequently, we have
\begin{Thm}\label{Int_K(A, A') is integrally closed}
$\Int_K(A, A')$ is an integrally closed ring and is equal to $\Int_K(\Omega_A, \Ln)$.
\end{Thm}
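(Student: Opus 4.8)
The plan is to observe that this theorem is, at bottom, the specialization of Theorem \ref{Equality of int rings} to the subset $S \subseteq M_n(K)$ obtained as the image of $A$ under the regular representation $B \hookrightarrow M_n(K)$, $b \mapsto M_b$. So rather than reprove anything, I would set up the dictionary between the intrinsic data of $B$ and the matrix data of Section \ref{Integral-valued}, check that the dictionary respects all the relevant notions, and then quote Theorem \ref{Equality of int rings} verbatim.

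First I would let $S$ denote the image of $A$ in $M_n(K)$ and verify that the embedding preserves every invariant in play. The key identity, already recorded in the bullet list preceding the statement, is $f(M_b) = M_{f(b)}$ for all $b \in B$ and $f \in K[X]$; from this the minimal polynomial $\mu_b$ agrees with $\mu_{M_b}$ and the eigenvalue set $\Omega_b$ agrees with $\Omega_{M_b}$, so that $\Omega_S = \Omega_A$ as subsets of $\olK$. Next I would match the auxiliary sets. Since $B$ is obtained from $A$ by extension of scalars, $K[S] = K[A] = B$, so the set $S' = \{M \in K[S] \mid M \text{ integral}\}$ of Definition \ref{Integral-valued polynomials} is precisely $A' = \{b \in B \mid b \text{ integral}\}$: integrality is detected by the minimal polynomial through Lemma \ref{Integral conditions}, and $\mu_b = \mu_{M_b}$, so $M_b$ is integral exactly when $b$ is. Combining these, a polynomial $f$ maps $A$ into $A'$ if and only if it maps $S$ into $S'$, because $f(b) \in A'$ iff $f(M_b) = M_{f(b)}$ is integral iff $f(M_b) \in S'$; hence $\Int_K(A, A') = \Int_K(S, S')$.

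With the identifications $\Int_K(A, A') = \Int_K(S, S')$ and $\Omega_S = \Omega_A$ in hand, I would simply apply Theorem \ref{Equality of int rings} to this $S$. That theorem yields $\Int_K(S, S') = \Int_K(\Omega_S, \Ln)$ together with integral closedness of the common ring, and translating back gives both assertions simultaneously: $\Int_K(A, A') = \Int_K(\Omega_A, \Ln)$, which is integrally closed. As a cross-check, integral closedness also follows directly from Proposition \ref{Algebraic integers} applied to $\mcE = \Omega_A$.

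I do not expect a genuine obstacle here, since all the substantive work was done in Section \ref{Integral-valued}. The only point demanding care is the bookkeeping that guarantees that $\Int_K(A, A')$, defined intrinsically inside $B$, coincides with $\Int_K(S, S')$, defined inside $M_n(K)$; once the compatibility $f(M_b) = M_{f(b)}$ is invoked, this reduces to routine verification, and the theorem follows as a corollary.
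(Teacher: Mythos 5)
Your proposal is correct and matches the paper's intent: the paper itself identifies $b$ with $M_b$, notes that Lemma \ref{Integral conditions} and (\ref{Eigenvalues}) carry over to $B$, and then derives the theorem exactly as in Theorem \ref{Equality of int rings}. Your version merely makes explicit the bookkeeping ($\mu_b = \mu_{M_b}$, $\Omega_S = \Omega_A$, $S' = A'$) that the paper leaves implicit.
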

By generalizing results from $\cite{LopWer}$, we will show that if $D$ has finite residue rings, then $\Int_K(A, A')$ is the integral closure of $\Int_K(A)$. This establishes the analogue of  \cite[Thm.\ IV.4.7]{CaCh} (originally proved in \cite[Prop. 2.2]{GHL}) mentioned in the introduction.

We will actually prove a slightly stronger statement and give a description of $\Int_K(A, A')$ as the integral closure of an intersection of pullbacks. Notice that
\begin{equation*}
\bigcap_{a \in A} (D[X] + \mu_a(X) \cdot K[X]) \subseteq \Int_K(A)
\end{equation*}
because if $f \in D[X] + \mu_a(X) \cdot K[X]$, then $f(a) \in D[a] \subseteq A$. We thus have a chain of inclusions
\begin{equation}\label{Pullbacks}
\bigcap_{a \in A} (D[X] + \mu_a(X) \cdot K[X]) \subseteq \Int_K(A) \subseteq \Int_K(A, A')
\end{equation}
and our work below will show that this is actually a chain of integral ring extensions.

\begin{Lem}\label{Int cl lemma}
Let $f \in \Int_K(A, A')$, and write $f(X) = g(X)/d$ for some $g \in D[X]$ and some nonzero $d \in D$. Then, for each $h \in D[X]$, $d^{n-1}h(f(X)) \in \bigcap_{a \in A} ( D[X] + \mu_a(X) \cdot K[X])$.
\end{Lem}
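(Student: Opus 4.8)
The plan is to verify membership in the intersection one factor at a time, reducing the problem for each $a \in A$ to a statement about the single element $f(a) \in B$, and then to exploit that $f(a)$ is integral over $D$ of degree at most $n$.

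First, I would record a clean reformulation of the pullback condition. Fix $a \in A$. Since $A \subseteq A'$, the element $a$ is integral over $D$, so by Lemma \ref{Integral conditions} its minimal polynomial $\mu_a$ is monic and lies in $D[X]$. Evaluation at $a$ is then a $K$-algebra map $\mathrm{ev}_a \colon K[X] \to K[a]$ with kernel $\mu_a(X)\cdot K[X]$ and with $\mathrm{ev}_a(D[X]) = D[a]$. For a homomorphism with kernel $N$ and any submodule $M$ one has $p \in M + N \iff \mathrm{ev}_a(p) \in \mathrm{ev}_a(M)$; applied with $M = D[X]$, this yields
\begin{equation*}
p \in D[X] + \mu_a(X)\cdot K[X] \iff p(a) \in D[a].
\end{equation*}
Hence it suffices to show that $d^{n-1}h(f(a)) \in D[a]$ for every $a \in A$.

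Second, I would fix $a$ and set $\beta := f(a) \in K[a] \subseteq B$. Because $f \in \Int_K(A, A')$, the element $\beta$ is integral over $D$; thus $\mu_\beta \in D[X]$ by Lemma \ref{Integral conditions}, and $s := \deg \mu_\beta \le n$ by the Cayley--Hamilton bound recorded for elements of $B$. Consequently $1, \beta, \dots, \beta^{s-1}$ generate $D[\beta]$ as a $D$-module, so for any $h \in D[X]$ we may write $h(\beta) = \sum_{i=0}^{s-1} e_i\, \beta^i$ with every $e_i \in D$.

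Third comes the denominator bookkeeping, which is where the exponent $n-1$ gets pinned down. Substituting $\beta = g(a)/d$ gives $\beta^i = g(a)^i/d^i$, so
\begin{equation*}
d^{\,s-1} h(\beta) = \sum_{i=0}^{s-1} e_i\, d^{\,s-1-i}\, g(a)^i \in D[a],
\end{equation*}
since every exponent $s-1-i$ is nonnegative and each $g(a)^i \in D[a]$. As $s \le n$, multiplying by the nonnegative power $d^{\,n-s}$ yields $d^{\,n-1} h(\beta) \in D[a]$. By the reformulation of the first step this is precisely $d^{n-1}h(f(X)) \in D[X] + \mu_a(X)\cdot K[X]$, and since $a \in A$ was arbitrary, the polynomial lies in the intersection. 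The substance of the argument is not a single hard step but the pairing of two observations --- that $D[X] + \mu_a(X)\cdot K[X]$ is exactly the preimage of $D[a]$ under evaluation at $a$, and that the integral element $f(a)$ has minimal polynomial of degree at most $n$. The only point demanding real care is the exponent count in the third step: one must check that the bound $s \le n$ forces the \emph{uniform} power $d^{\,n-1}$ (independent of $h$) to clear all denominators.
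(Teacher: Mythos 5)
Your proof is correct and rests on exactly the same two ingredients as the paper's: that $\mu_{f(a)}$ is monic in $D[X]$ of degree $s\le n$, and that $d^{n-1}$ clears the denominators of $1, f(X),\dots,f(X)^{s-1}$. The only difference is cosmetic packaging --- the paper divides $h$ by $\mu_{f(a)}$ and exhibits the decomposition of $d^{n-1}h(f(X))$ directly in $K[X]$, whereas you pass through the (valid) equivalence $p\in D[X]+\mu_a(X)\cdot K[X]\iff p(a)\in D[a]$ and argue at the level of $h(f(a))\in D[f(a)]$.
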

\begin{proof}
Let $a \in A$. Since $f \in \Int_K(A, A')$, $m := \mu_{f(a)} \in D[X]$, and $\deg(m) \leq n$.

Now, $m$ is monic, so we can divide $h$ by $m$ to get $h(X) = q(X)m(X) + r(X)$, where $q, r \in D[X]$, and either $r = 0$ or $\deg(r) < n$. Then,
\begin{equation*}
d^{n-1} h(f(X)) = d^{n-1} q(f(X)) m(f(X)) + d^{n-1} r(f(X))
\end{equation*}
The polynomial $d^{n-1} q(f(X)) m(f(X)) \in K[X]$ is divisible by $\mu_a(X)$ because $m(f(a)) = 0$. Since $\deg(r) < n$, $d^{n-1} r(f(X)) \in D[X]$. Thus, $d^{n-1} h(f(X)) \in D[X] + \mu_a(X) \cdot K[X]$, and since $a$ was arbitrary the lemma is true.
\end{proof}

For the next result, we need an additional assumption. Recall that a ring $D$ has finite residue rings if for all proper nonzero ideal $I\subset D$, the residue ring $D/I$ is finite. Clearly, this condition is equivalent to asking that for all nonzero $d\in D$, the residue ring $D/dD$ is finite.

\begin{Thm}\label{Integral closure of Int(A)}
Assume that $D$ has finite residue rings. Then, $\Int_K(A, A') = \Int_K(\Omega_A, \Ln)$ is the integral closure of both $\bigcap_{a \in A} (D[X] + \mu_a(X) \cdot K[X])$ and $\Int_K(A)$.
\end{Thm}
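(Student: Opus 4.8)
The plan is to deduce everything from the chain \eqref{Pullbacks} together with the fact, already recorded in Theorem \ref{Int_K(A, A') is integrally closed}, that $\Int_K(A,A')=\Int_K(\Omega_A,\Ln)$ is integrally closed in $K[X]$. Writing $R:=\bigcap_{a\in A}(D[X]+\mu_a(X)\cdot K[X])$ for the smallest ring in that chain, it suffices to prove that every $f\in\Int_K(A,A')$ is integral over $R$: integrality over $\Int_K(A)$ is then automatic because $R\subseteq\Int_K(A)$, and an integrally closed subring of $K[X]$ that is integral over a subring $R$ must equal the integral closure of $R$ in $K[X]$. So the whole theorem reduces to a single integrality statement.

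First I would record the convenient reformulation $R=\{F\in K[X]\mid F(a)\in D[a]\text{ for all }a\in A\}$: indeed $F\in D[X]+\mu_a(X)K[X]$ if and only if the remainder of $F$ modulo the monic polynomial $\mu_a\in D[X]$ has coefficients in $D$, which, since $1,a,\dots,a^{\deg\mu_a-1}$ is a $D$-basis of $D[a]$, is the same as $F(a)\in D[a]$. Now fix $f\in\Int_K(A,A')$ and write $f=g/d$ with $g\in D[X]$ and $d\in D\setminus\{0\}$ (if $d$ is a unit then $f\in D[X]\subseteq R$ and there is nothing to prove). By Lemma \ref{Int cl lemma} applied with $h(X)=X^j$ we obtain $d^{n-1}f^j\in R$ for every $j\ge0$, that is $f(a)^j\in\tfrac1{d^{n-1}}D[a]$ for all $a\in A$ and all $j$.

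The engine of the proof is then a dynamical pigeonhole argument, and this is exactly where the finite residue hypothesis enters. For each $a$, since $f(a)\in A'$ is integral over $D$, its characteristic polynomial $\chi_{f(a)}$ lies in $D[X]$ (as in Lemma \ref{Integral conditions}, using that $D$ is integrally closed) and is monic of degree $n$ with $\chi_{f(a)}(f(a))=0$; hence the powers $v_j:=f(a)^j$ satisfy an order-$n$ $D$-linear recurrence whose coefficients are the entries of $\chi_{f(a)}$. Reducing modulo $D[a]$, the residues $\overline{v_j}$ live in the finite $D$-module $M_a:=\tfrac1{d^{n-1}}D[a]/D[a]\cong(D/d^{n-1}D)^{\deg\mu_a}$, whose cardinality is bounded, uniformly in $a$, by $Q:=|D/d^{n-1}D|^{\,n}$ (here $D/d^{n-1}D$ is finite precisely because $D$ has finite residue rings). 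The recurrence induces a deterministic transition map on the state space $M_a^{\,n}$, a finite set of size at most $N:=Q^{\,n}$, so the sequence of states is eventually periodic with both preperiod and period at most $N$, and these bounds do not depend on $a$. Taking $P:=\mathrm{lcm}(1,2,\dots,N)$, I would then get $f(a)^{N+P}\equiv f(a)^{N}\pmod{D[a]}$ for every $a\in A$, i.e. $f^{N+P}-f^{N}\in R$. Setting $r:=f^{N+P}-f^{N}\in R$, the monic polynomial $Y^{N+P}-Y^{N}-r\in R[Y]$ is killed by $f$, so $f$ is integral over $R$, completing the reduction from the first paragraph.

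The hard part is the uniformity. The per-$a$ finiteness of $\{\,f(a)^j\bmod D[a]\,\}$ is immediate, but by itself it only yields, for each $a$, two congruent powers with $a$-dependent exponents, and such data cannot be assembled into a single polynomial identity in $K[X]$. What rescues the argument is that Cayley--Hamilton converts the powers of $f(a)$ into a deterministic, hence eventually periodic, dynamical system whose state space has cardinality bounded independently of $a$; the finite-residue hypothesis is exactly what supplies this uniform finiteness, and the $\mathrm{lcm}$ synchronizes all the periods simultaneously. I expect the only remaining technical points to be the bookkeeping of the bound $N$ and the verification that the transition map is well defined on the $D$-module $M_a$, with no further conceptual obstacle.
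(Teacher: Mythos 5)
Your argument is correct, but it proves the crucial integrality statement by a genuinely different mechanism than the paper. The two proofs share the same skeleton --- reduce, via the chain (\ref{Pullbacks}) and the integral closedness of $\Int_K(A,A')$ from Theorem \ref{Int_K(A, A') is integrally closed}, to exhibiting a monic polynomial over $R=\bigcap_{a\in A}(D[X]+\mu_a(X)K[X])$ that kills a given $f=g/d\in\Int_K(A,A')$, with Lemma \ref{Int cl lemma} as the workhorse --- but they apply the finite-residue hypothesis to different objects. The paper applies it to the \emph{coefficients}: the minimal polynomials $\mu_{f(a)}$ are monic of degree at most $n$ in $D[X]$, hence fall into finitely many classes modulo $(d^{n-1})^2$, and the product $\phi$ of a finite set $\mcP$ of representatives satisfies $\phi(f)\in R$ (the square of $d^{n-1}$ is chosen so that the error term splits into a product of two factors, each tamed by Lemma \ref{Int cl lemma}). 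You apply it to the \emph{values}: Lemma \ref{Int cl lemma} with $h=X^j$ puts $f(a)^j$ into $\tfrac{1}{d^{n-1}}D[a]$, the Cayley--Hamilton recurrence turns $j\mapsto f(a)^j \bmod D[a]$ into the orbit of a deterministic map on a state space of size bounded independently of $a$, and uniform eventual periodicity plus the lcm trick yields the single monic relation $Y^{N+P}-Y^N-r$ with $r\in R$. Your route needs one extra (correct) identification, namely $R=\{F\in K[X]\mid F(a)\in D[a]\ \text{for all } a\}$, which rests on $D[a]\cong D[X]/(\mu_a)$ and hence on $\mu_a$ being monic in $D[X]$ and generating the null ideal of $a$ over $D$ --- valid here because each $a\in A$ is integral and $D$ is integrally closed. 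What each approach buys: the paper's $\phi$ has modest degree (at most $n\,|\mcP|$) and stays entirely inside polynomial arithmetic, while your construction produces an enormous exponent $N+P$ but isolates very transparently where finiteness of residue rings is used (uniform finiteness of the modules $\tfrac{1}{d^{n-1}}D[a]/D[a]$) and synchronizes all $a\in A$ at once by a purely combinatorial pigeonhole. Both proofs are valid.
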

\begin{proof}
Let $R = \bigcap_{a \in A} (D[X] + \mu_a(X) \cdot K[X])$. By (\ref{Pullbacks}), it suffices to prove that $\Int_K(A, A')$ is the integral closure of $R$. Let $f(X) = g(X)/d \in \Int_K(A, A')$. By Theorem \ref{Int_K(A, A') is integrally closed}, $\Int_K(A, A')$ is integrally closed, so it is enough to find a monic polynomial $\phi \in D[X]$ such that $\phi(f(X)) \in R$.

Let $\mcP \subseteq D[X]$ be a set of monic residue representatives for $\{\mu_{f(a)}(X)\}_{a \in A}$ modulo $(d^{n-1})^2$.

Since $D$ has finite residue rings, $\mcP$ is finite. Let $\phi(X)$ be the product of all the polynomials in $\mcP$. Then, $\phi$ is monic and is in $D[X]$.

Fix $a \in A$ and let $m = \mu_{f(a)}$. There exists $p(X) \in \mcP$ such that $p(X)$ is equivalent to $m$ mod $(d^{n-1})^2$, so $p(X) = m(X) + (d^{n-1})^2 r(X)$ for some $r \in D[X]$. Furthermore, $p(X)$ divides $\phi(X)$, so there exists $q(X) \in D[X]$ such that $\phi(X) = p(X) q(X)$. Thus,
\begin{align*}
\phi(f(X)) &= p(f(X)) q(f(X))\\
&= m(f(X))q(f(X)) + (d^{n-1})^2 r(f(X)) q(f(X))\\
&= m(f(X))q(f(X)) + d^{n-1} r(f(X)) \cdot d^{n-1} q(f(X))\\
\end{align*}

As in Lemma \ref{Int cl lemma}, $m(f(X))q(f(X)) \in \mu_a(X) \cdot K[X]$ because $m(f(a)) = 0$. By Lemma \ref{Int cl lemma},
$d^{n-1} r(f(X))$ and  $d^{n-1} q(f(X))$ are in $D[X] + \mu_a(X)\cdot K[X]$. 
Hence, $\phi(f(X)) \in D[X] + \mu_a(X) \cdot K[X]$, and since $a$ was arbitrary, $\phi(f(X)) \in R$.
\end{proof}

Theorem \ref{Integral closure of Int(A)} says that the integral closure of $\Int_K(A)$ is equal to the ring of polynomials in $K[X]$ which map the eigenvalues of all the elements $a\in A$ to integral elements over $D$.

\begin{Rem}\label{Pullback remark}
By following essentially the same steps as in Lemma \ref{Int cl lemma} and Theorem \ref{Integral closure of Int(A)}, one may prove that $\Int_K(A, A')$ is the integral closure of $\Int_K(A)$ without the use of the pullbacks $D[X] + \mu_a(X) \cdot K[X]$. However, employing the pullbacks gives a slightly stronger theorem without any additional difficulty.

In the case $A=M_n(D)$, $\Int_K(M_n(D))$ is equal to the intersection of the pullbacks $D[X] + \mu_M(X) \cdot K[X]$, for $M\in M_n(D)$. Indeed, let $f\in \Int_K(M_n(D))$ and $M\in M_n(D)$. By \cite[Remark 2.1 \& (3)]{Per2}, $\Int_K(M_n(D))$ is equal to the intersection of the pullbacks $D[X]+\chi_M(X)K[X]$, for $M\in M_n(D)$, where $\chi_M(X)$ is the characteristic polynomial of $M$. 
By the Cayley-Hamilton Theorem, $\mu_M(X)$ divides $\chi_M(X)$ so that $f\in D[X]+\chi_M(X)K[X]\subseteq D[X]+\mu_M(X)K[X]$ and we are done.
\end{Rem}

\begin{Rem}
The assumption that $D$ has finite residue rings implies that $D$ is Noetherian. Given that \cite[Thm.\ IV.4.7]{CaCh} (or \cite[Prop. 2.2]{GHL}) requires only the assumption that $D$ is Noetherian, it is fair to ask if Theorem \ref{Integral closure of Int(A)} holds under the weaker condition that $D$ is Noetherian. 
\end{Rem}

Note that $\Omega_{M_n(D)} = \Ln$ (and in particular, $\Omega_{M_n(\Z)} = \mcan$). Hence, we obtain the following (which generalizes \cite[Thm.\ 4.6]{LopWer}):
\begin{Cor}\label{Int cl of M_n(Z)}
If $D$ has finite residue rings, then the integral closure of $\Int_K(M_n(D))$ is $\Int_K(\Ln)$.
\end{Cor}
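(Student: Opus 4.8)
The plan is to read off the corollary as the special case $A = M_n(D)$ of Theorem \ref{Integral closure of Int(A)}, where $B = A \otimes_D K = M_n(K)$. That theorem already gives that the integral closure of $\Int_K(M_n(D))$ equals $\Int_K(\Omega_A, \Ln)$, so the whole task reduces to computing the eigenvalue set $\Omega_A = \Omega_{M_n(D)}$ and then simplifying the resulting ring of integral-valued polynomials to $\Int_K(\Ln)$. I would therefore spend the proof establishing the set equality $\Omega_{M_n(D)} = \Ln$ and checking that it forces the right target ring.

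I would prove $\Omega_{M_n(D)} = \Ln$ by two inclusions. For $\Omega_{M_n(D)} \subseteq \Ln$, any $M \in M_n(D)$ has monic characteristic polynomial $\chi_M \in D[X]$ of degree $n$, and every $\alpha \in \Omega_M$ is a root of $\chi_M$ (equivalently of $\mu_M$, by Lemma \ref{Integral conditions}), hence is integral over $D$ of degree at most $n$, so $\alpha \in \Ln$. For the reverse inclusion $\Ln \subseteq \Omega_{M_n(D)}$, I would use companion matrices: given $\alpha \in \Ln$ a root of a monic $p \in D[X]$ of degree $k \leq n$, I pass to $X^{\,n-k} p(X) \in D[X]$, which is monic of degree exactly $n$, and take its companion matrix $C \in M_n(D)$. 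Since the characteristic polynomial of $C$ is $X^{\,n-k}p(X)$, the element $\alpha$ is an eigenvalue of $C$, so $\alpha \in \Omega_C \subseteq \Omega_{M_n(D)}$. This surjectivity direction is genuinely needed, since enlarging the evaluation set shrinks the ring, so only $\Ln \subseteq \Omega_{M_n(D)}$ yields the inclusion $\Int_K(\Omega_A, \Ln) \subseteq \Int_K(\Ln)$.

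With $\Omega_A = \Ln$ in hand, the integral closure of $\Int_K(M_n(D))$ becomes $\Int_K(\Ln, \Ln) = \Int_K(\Ln)$, as claimed. The one point requiring care is the index on the ambient $\Lambda$: the regular representation embeds $B = M_n(K)$ into $M_{n^2}(K)$, so a literal application of Theorem \ref{Integral closure of Int(A)} would produce the target ring $\Int_K(\Omega_A, \Lambda_{n^2})$ rather than $\Int_K(\Omega_A, \Ln)$. This discrepancy is harmless: because every $\alpha \in \Omega_A = \Ln$ has $[K(\alpha):K] \leq n$, for any $f \in K[X]$ we get $[K(f(\alpha)):K] \leq [K(\alpha):K] \leq n$, so $f(\alpha) \in \Lambda_{n^2}$ if and only if $f(\alpha)$ is integral over $D$ if and only if $f(\alpha) \in \Ln$. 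This is exactly the degree-non-increasing argument from the proof of Proposition \ref{Algebraic integers}, and it collapses $\Int_K(\Ln, \Lambda_{n^2})$ to $\Int_K(\Ln)$. I expect this index bookkeeping, together with the companion-matrix construction proving $\Ln \subseteq \Omega_{M_n(D)}$, to be the only substantive steps; the remainder is a direct specialization of the already-proved theorem.
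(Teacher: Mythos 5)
Your proposal is correct and follows essentially the same route as the paper, which simply notes that $\Omega_{M_n(D)} = \Ln$ and invokes Theorem \ref{Integral closure of Int(A)}; your companion-matrix argument for $\Ln \subseteq \Omega_{M_n(D)}$ and the characteristic-polynomial argument for the reverse inclusion just make explicit what the paper leaves as an observation. Your point about the index (that the target $\Lambda_{n^2}$ from the regular representation collapses to $\Ln$ via the degree-non-increasing argument of Proposition \ref{Algebraic integers}) is a legitimate and correctly resolved piece of bookkeeping that the paper glosses over by working directly with the identity representation $M_n(D) \subseteq M_n(K)$.
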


The algebra of upper triangular matrices yields another interesting example.

\begin{Cor}\label{Triangular matrices}
Assume that $D$ has finite residue rings. For each $n > 0$, let $T_n(D)$ be the ring of $n \times n$ upper triangular matrices with entries in $D$. Then, the integral closure of $\Int_K(T_n(D))$ equals $\Int(D)$.
\end{Cor}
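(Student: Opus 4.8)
The plan is to apply Theorem \ref{Integral closure of Int(A)} with $A = T_n(D)$ and then identify the resulting ring of integral-valued polynomials explicitly. First I would verify that $T_n(D)$ meets the standing hypotheses: it is torsion-free and finitely generated as a $D$-module (a free basis being the matrix units $e_{ij}$ with $i \le j$), so the theorem applies. Writing $N = \dim_K(T_n(K)) = n(n+1)/2$ for the dimension governing the embedding into $M_N(K)$, Theorem \ref{Integral closure of Int(A)} tells me that the integral closure of $\Int_K(T_n(D))$ equals $\Int_K(\Omega_{T_n(D)}, \Lambda_N)$.

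The key computation is to determine $\Omega_{T_n(D)}$. Since the eigenvalues of an upper triangular matrix are exactly its diagonal entries, and the roots $\Omega_M$ of the minimal polynomial $\mu_M$ are independent of the chosen representation, the set $\Omega_M$ for $M \in T_n(D)$ consists of the diagonal entries of $M$, all of which lie in $D$. Conversely, every $d \in D$ occurs as a diagonal entry of some matrix in $T_n(D)$, for instance the scalar matrix $dI$. Hence $\Omega_{T_n(D)} = D$.

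It then remains to identify $\Int_K(D, \Lambda_N)$ with $\Int(D)$. For any $f \in K[X]$ and $d \in D$ we have $f(d) \in K$; on the other hand, every element of $\Lambda_N$ is integral over $D$, so $\Lambda_N \cap K$ consists precisely of the elements of $K$ integral over $D$, which is $D$ because $D$ is integrally closed. Therefore the condition $f(D) \subseteq \Lambda_N$ is equivalent to $f(D) \subseteq \Lambda_N \cap K = D$, i.e. to $f \in \Int(D)$. Combining the three steps gives that the integral closure of $\Int_K(T_n(D))$ equals $\Int(D)$.

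I do not anticipate a serious obstacle; the work is essentially bookkeeping once Theorem \ref{Integral closure of Int(A)} is available. The one point demanding a little care is that the relevant lattice is $\Lambda_N$ with $N = n(n+1)/2$ rather than $\Lambda_n$, but since $\Omega_{T_n(D)} \subseteq K$ the precise value of $N$ is irrelevant: the intersection $\Lambda_N \cap K = D$ holds for every $N$. The only genuinely substantive input is the observation that the eigenvalues of triangular matrices are their diagonal entries, which forces $\Omega_{T_n(D)}$ down into $K$ and collapses the integral-valued condition to ordinary integer-valuedness on $D$.
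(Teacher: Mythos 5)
Your proof is correct and takes essentially the same route as the paper: invoke Theorem \ref{Integral closure of Int(A)}, compute $\Omega_{T_n(D)} = D$ from the fact that the minimal polynomial of a triangular matrix splits with roots among the diagonal entries, and use $\Lambda_m \cap K = D$ (integral closedness of $D$) to collapse $\Int_K(D, \Lambda_m)$ to $\Int(D)$. Your remark that the relevant subscript is $m = n(n+1)/2$ from the regular representation, but that its exact value is irrelevant once $\Omega_{T_n(D)} \subseteq K$, is a point the paper glosses over and you handle correctly.
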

\begin{proof}
For each $a \in T_n(D)$, $\mu_a$ splits completely and has roots in $D$, so $\Omega_{T_n(D)} = D$. Hence, the integral closure of $\Int_K(T_n(D))$ is $\Int_K(\Omega_{T_n(D)}, \Ln) = \Int_K(D, \Ln)$. But, polynomials in $K[X]$ that move $D$ into $\Ln$ actually move $D$ into $\Ln \cap K = D$. Thus, $\Int_K(D, \Ln) = \Int(D)$.
\end{proof}

Since $\Int_K(T_{n}(D)) \subseteq \Int_K(T_{n-1}(D))$ for all $n > 0$, the previous proposition proves that
\begin{equation*}
\cdots \subseteq \Int_K(T_n(D)) \subseteq \Int_K(T_{n-1}(D)) \subseteq \cdots \subseteq \Int(D)
\end{equation*}
is a chain of integral ring extensions.

\section{Matrix rings and polynomially dense subsets}\label{Matrix rings}

For any $D$-algebra $A$, we have $(A')' = A'$, so $\Int_K(A')$ is integrally closed by Theorem \ref{Int_K(A, A') is integrally closed}. Furthermore, $\Int_K(A')$ is always contained in $\Int_K(A, A')$. One may then ask: when does $\Int_K(A')$ equal $\Int_K(A, A')$? In this section, we investigate this question and attempt to identify \textit{polynomially dense} subsets of rings of integral-valued polynomials. The theory presented here is far from complete, so we raise several related questions worthy of future research.

\begin{Def}\label{Polynomially dense}
Let $S \subseteq T \subseteq M_n(K)$. Define $\Int_K(S, T) := \{f \in K[X] \mid f(S) \subseteq T\}$ and $\Int_K(T) := \Int_K(T, T)$. To say that $S$ is \textit{polynomially dense in T} means that $\Int_K(S, T) = \Int_K(T)$.
\end{Def}

Thus, the question posed at the start of this section can be phrased as: is $A$ polynomially dense in $A'$? 

In general, it is not clear how to produce polynomially dense subsets of $A'$, but we can describe some polynomially dense subsets of $M_n(D)'$.

\begin{Prop}\label{Other poly dense sets}
For each $\Omega \subset \Ln$ of cardinality at most $n$, choose $M \in M_n(D)'$ such that $\Omega_M = \Omega$. Let $S$ be the set formed by such matrices. Then, $S$ is polynomially dense in $M_n(D)'$. In particular, the set of companion matrices in $M_n(D)$ is polynomially dense in $M_n(D)'$.
\end{Prop}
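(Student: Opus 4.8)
The plan is to show the two inclusions $\Int_K(M_n(D)') \subseteq \Int_K(S, M_n(D)')$ and $\Int_K(S, M_n(D)') \subseteq \Int_K(M_n(D)')$; the first is trivial since $S \subseteq M_n(D)'$, so the whole content is the reverse inclusion. By Theorem \ref{Equality of int rings}, everything can be translated into the language of eigenvalues: we have $\Int_K(M_n(D)') = \Int_K(\Omega_{M_n(D)'}, \Ln)$, and since $M_n(D)' \subseteq K[M_n(D)] = M_n(K)$ consists of integral matrices whose eigenvalues lie in $\Ln$, while conversely every element of $\Ln$ is an eigenvalue of some integral matrix in $M_n(D)'$ (e.g.\ a companion matrix), one checks that $\Omega_{M_n(D)'} = \Ln$. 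Likewise $\Omega_S = \bigcup_{M \in S} \Omega_M = \bigcup_{\Omega} \Omega = \Ln$, because by construction $S$ contains, for every finite subset $\Omega \subseteq \Ln$ of size at most $n$, a matrix with exactly that eigenvalue set, and every element of $\Ln$ appears in some such $\Omega$. Thus both $\Int_K(S, M_n(D)')$ and $\Int_K(M_n(D)')$ get rewritten, via Theorem \ref{Equality of int rings} applied to the relevant sets, as rings of the form $\Int_K(\mcE, \Ln)$.

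First I would make the eigenvalue bookkeeping precise. Applying Theorem \ref{Equality of int rings} with the set $S$ gives $\Int_K(S, S') = \Int_K(\Omega_S, \Ln) = \Int_K(\Ln)$; but I must be careful, because $S' $ is the integralization inside $K[S]$, and what I actually want is $\Int_K(S, M_n(D)')$, i.e.\ polynomials sending $S$ into $M_n(D)'$ rather than into $S'$. The key observation resolving this is that the condition $f(M) \in M_n(D)'$ for $M \in M_n(D)'$ depends, by Lemma \ref{Integral conditions} and (\ref{Eigenvalues}), only on whether $\Omega_{f(M)} = f(\Omega_M) \subseteq \Ln$, since $f(M)$ automatically lies in $K[M] \subseteq M_n(K) = K[M_n(D)]$ and membership in $M_n(D)'$ is exactly integrality together with lying in this algebra. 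So for any $M \in M_n(D)'$ one has $f(M) \in M_n(D)' \iff f(\Omega_M) \subseteq \Ln$, and intersecting over $M \in S$ yields $\Int_K(S, M_n(D)') = \Int_K(\Omega_S, \Ln) = \Int_K(\Ln)$. The identical computation with $S$ replaced by all of $M_n(D)'$ gives $\Int_K(M_n(D)') = \Int_K(\Omega_{M_n(D)'}, \Ln) = \Int_K(\Ln)$, and comparing the two proves polynomial density.

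The main obstacle, and the step deserving the most care, is verifying that $\Omega_S = \Ln$, which rests on the realizability claim: for every subset $\Omega \subseteq \Ln$ with $|\Omega| \le n$ there genuinely exists a matrix $M \in M_n(D)'$ with $\Omega_M = \Omega$. For a single $\alpha \in \Ln$ of degree $d \le n$, its minimal polynomial $\mu_\alpha$ is monic in $D[X]$ by Lemma \ref{Integral conditions}, so the companion matrix of $\mu_\alpha$ (padded to size $n$, say by a block with repeated eigenvalue $\alpha$, or by choosing $\Omega$ to have full size $n$) lies in $M_n(D) \subseteq M_n(D)'$ and has the prescribed eigenvalue. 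The companion-matrix remark in the statement is precisely the assertion that these realizing matrices can all be taken in $M_n(D)$ itself, giving the "in particular" clause for free once the general density is established. I would take a little care to confirm that one can simultaneously realize a full eigenvalue \emph{set} of size up to $n$ by a single companion (or block-companion) matrix with entries in $D$ — this follows by taking the companion matrix of the product of the relevant monic minimal polynomials, arranged to have degree exactly $n$ — but beyond this realizability point the argument is a direct translation through Theorem \ref{Equality of int rings}.
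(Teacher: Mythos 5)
Your proposal is correct and follows essentially the same route as the paper: the paper's proof is the pointwise version of your argument, taking $f\in\Int_K(S,M_n(D)')$ and $N\in M_n(D)'$, picking $M\in S$ with $\Omega_M=\Omega_N$, and transferring integrality of $f(M)$ to $f(N)$ via Lemma \ref{Integral conditions} and (\ref{Eigenvalues}), which is exactly the mechanism you package globally as $\Int_K(S,M_n(D)')=\Int_K(\Omega_S,\Ln)=\Int_K(\Ln)=\Int_K(M_n(D)')$. The realizability caveat you raise (which $\Omega\subseteq\Ln$ actually occur as $\Omega_M$) is handled no more carefully in the paper than in your sketch; the clean fix in both cases is to realize $\Omega_N$ by the companion matrix of the characteristic polynomial of $N$, which lies in $D[X]$ because $D$ is integrally closed.
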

\begin{proof}
We know that $\Int_K(M_n(D)') \subseteq \Int_K(S, M_n(D)')$, so we must show that the other containment holds. Let $f \in \Int_K(S, M_n(D)')$ and $N \in M_n(D)'$. Let $M \in S$ such that $\Omega_M = \Omega_N$. Then, $f(M)$ is integral, so by Lemma \ref{Integral conditions} and (\ref{Eigenvalues}), $f(N)$ is also integral.

The proposition holds for the set of companion matrices because for any $\Omega \subset \Ln$, we can find a companion matrix in $M_n(D)$ whose eigenvalues are the elements of $\Omega$.
\end{proof}

By the proposition, any subset of $M_n(D)$ containing the set of companion matrices is polynomially dense in $M_n(D)'$. In particular, $M_n(D)$ is polynomially dense in $M_n(D)'$. 

When $D = \Z$, we can say more. In \cite{Per1} it is shown that $\Int_{\Q}(\mathcal{A}_n)=\Int_{\Q}(A_n,\mathcal{A}_n)$, where $A_n$ is the set of algebraic integers of degree  equal to $n$. Letting $\mathcal{I}$ be the set of companion matrices in $M_n(\Z)$ of irreducible polynomials, we have $\Omega_{\mathcal{I}} = A_n$. Hence, by Corollary \ref{Int cl of M_n(Z)} and Theorem \ref{Equality of int rings}, $\mathcal{I}$ is polynomially dense in $M_n(\Z)'$.

Returning to the case of a general $D$-algebra $A$, the following diagram summarizes the relationships among the various polynomial rings we have considered:
\begin{equation}\label{Diagram}
\begin{array}{cccc}
\Int_K(A) \subseteq & \Int_K(A,A')	&=		& \Int_K(\Omega_A, \Ln)\\
& \subsetUP			&			& \subsetUP \\
& \Int_K(A')		&=		& \Int_K(\Omega_{A'}, \Ln)\\
& \subsetUP			&			& \subsetUP \\
& \Int_K(M_n(D)')	&=		& \Int_K(\Ln)
\end{array}
\end{equation}
From this diagram, we deduce that $A$ is polynomially dense in $A'$ if and only if $\Omega_A$ is polynomially dense in $\Omega_{A'}$.

It is fair to ask what other relationships hold among these rings. We present several examples and a proposition concerning possible equalities in the diagram. Again, we point out that such equalities can be phrased in terms of polynomially dense subsets. First, we show that $\Int_K(A)$ need not equal $\Int_K(A, A')$ (that is, $A$ need not be polynomially dense in $A'$).

\begin{Ex}\label{Z[sqrt-3]}
Take $D = \Z$ and $A = \Z[\sqrt{-3}]$. Then, $A' = \Z[\theta]$, where $\theta = \frac{1 + \sqrt{-3}}{2}$. The ring $\Int_K(A, A')$ contains both $\Int_K(A)$ and $\Int_K(A')$.

If $\Int_K(A, A')$ equaled $\Int_K(A')$, then we would have $\Int_K(A) \subseteq \Int_K(A')$. However, this is not the case. Indeed, working mod 2, we see that for all $\alpha = a + b\sqrt{-3} \in A$, $\alpha^2 \equiv a^2 - 3b^2 \equiv a^2 + b^2$. So, $\alpha^2(\alpha^2 + 1)$ is always divisible by 2, and hence $\frac{x^2(x^2+1)}{2} \in \Int_K(A)$. On the other hand, $\frac{\theta^2(\theta^2 + 1)}{2} = -\frac{1}{2}$, so $\frac{x^2(x^2+1)}{2} \notin \Int_K(A')$. Thus, we conclude that $\Int_K(A') \subsetneqq \Int_K(A, A')$.
\end{Ex}

The work in the previous example suggests the following proposition.

\begin{Prop}\label{CC IV.4.9}
Assume that $D$ has finite residue rings. Then, $A$ is polynomially dense in $A'$ if and only if $\Int_K(A) \subseteq \Int_K(A')$.
\end{Prop}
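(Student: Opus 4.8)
The plan is to prove the two implications separately, after first recording the easy half of the equivalence between the two phrasings. Since $A \subseteq A'$, we always have $\Int_K(A') \subseteq \Int_K(A, A')$: if $f(A') \subseteq A'$ then a fortiori $f(A) \subseteq A'$. Hence the assertion that $A$ is polynomially dense in $A'$, i.e. $\Int_K(A, A') = \Int_K(A')$, is equivalent to the single reverse containment $\Int_K(A, A') \subseteq \Int_K(A')$, and this is what I would aim to characterize.

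For the forward implication, suppose $A$ is polynomially dense in $A'$, so that $\Int_K(A, A') = \Int_K(A')$. From the chain (\ref{Pullbacks}) we already know $\Int_K(A) \subseteq \Int_K(A, A')$, and combining this with the hypothesis gives $\Int_K(A) \subseteq \Int_K(A')$ at once. This direction uses neither the finite-residue-ring hypothesis nor any integral-closure machinery.

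The substance lies in the reverse implication, where I would invoke the main theorem. Assume $\Int_K(A) \subseteq \Int_K(A')$. The two structural inputs are: first, by Theorem \ref{Integral closure of Int(A)}, the ring $\Int_K(A, A')$ is the integral closure of $\Int_K(A)$ (this is precisely where the finite-residue-ring assumption is consumed); and second, as noted at the start of Section \ref{Matrix rings}, $(A')' = A'$, so $\Int_K(A')$ is integrally closed. All three rings $\Int_K(A)$, $\Int_K(A')$, $\Int_K(A, A')$ sit between $D[X]$ and $K[X]$ and therefore share the fraction field $K(X)$. Consequently any $f \in \Int_K(A, A')$, being integral over $\Int_K(A)$ and hence over the larger ring $\Int_K(A')$, lies in $\Int_K(A')$ because that ring is integrally closed in $K(X)$. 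This yields $\Int_K(A, A') \subseteq \Int_K(A')$, and with the always-valid reverse containment we conclude $\Int_K(A, A') = \Int_K(A')$, i.e. $A$ is polynomially dense in $A'$.

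The main obstacle I anticipate is not a computation but a matter of bookkeeping about the ambient setting in which integral closures are taken. I would therefore check explicitly that $D[X] \subseteq \Int_K(A)$ (so that all the rings in play have the common fraction field $K(X)$), and confirm that the integral closure appearing in Theorem \ref{Integral closure of Int(A)} is the integral closure inside $K(X)$, so that it may legitimately be compared with the integrally closed overring $\Int_K(A')$. Once these compatibilities are secured, the core of the reverse implication is the purely formal fact that the integral closure of a subring is contained in every integrally closed overring sharing its fraction field.
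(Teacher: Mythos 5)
Your proposal is correct and follows essentially the same route as the paper: the forward direction is the trivial containment $\Int_K(A) \subseteq \Int_K(A,A')$, and the reverse direction sandwiches $\Int_K(A')$ between $\Int_K(A)$ and its integral closure $\Int_K(A,A')$ (Theorem \ref{Integral closure of Int(A)}), then uses that $\Int_K(A')$ is integrally closed because $(A')'=A'$. Your extra bookkeeping about all rings sharing the fraction field $K(X)$ is a sound precaution but not a departure from the paper's argument.
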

\begin{proof}
This is similar to \cite[Thm.\ IV.4.9]{CaCh}. If $A$ is polynomially dense in $A'$, then $\Int_K(A') = \Int_K(A, A')$, and we are done because $\Int_K(A, A')$ always contains $\Int_K(A)$. Conversely, assume that $\Int_K(A) \subseteq \Int_K(A')$. Then, $\Int_K(A) \subseteq \Int_K(A') \subseteq \Int_K(A, A')$. Since $\Int_K(A')$ is integrally closed  by Theorem \ref{Int_K(A, A') is integrally closed} and $\Int_K(A, A')$ is integral over $\Int_K(A)$ by Theorem \ref{Integral closure of Int(A)}, we must have $\Int_K(A') = \Int_K(A, A')$.
\end{proof}

By Proposition \ref{Other poly dense sets}, $\Int_K(M_n(D)') = \Int_K(M_n(D), M_n(D)')$. There exist algebras other than matrix rings for which $\Int_K(A') = \Int_K(A, A')$. We now present two such examples.

\begin{Ex}\label{Triangular mats}
Let $A = T_n(D)$, the ring of $n \times n$ upper triangular matrices with entries in $D$. Define $T_n(K)$ similarly. Then, $A'$ consists of the integral matrices in $T_n(K)$, and since $D$ is integrally closed, such matrices must have diagonal entries in $D$. Thus, $\Omega_{A'} = D = \Omega_{A}$. It follows that $\Int_K(T_n(D), T_n(D)') = \Int_K(T_n(D)')$.
\end{Ex}

\begin{Ex}\label{Hurwitz quats}
Let $\bfi$, $\bfj$, and $\bfk$ be the standard quaternion units satisfying $\bfi^2 = \bfj^2 = \bfk^2 = -1$ and $\bfi\bfj = \bfk = -\bfj\bfi$ 
(see e.g.\ \cite[Ex.\ 1.1, 1.13]{Lam} or \cite{Dickson} for basic material on quaternions).

Let $A$ be the $\Z$-algebra consisting of Hurwitz quaternions:
\begin{equation*}
A = \{a_0 + a_1\bfi + a_2\bfj + a_3\bfk \mid a_\ell \in \Z \text{ for all $\ell$ or } a_\ell \in \Z + \tfrac{1}{2} \text{ for all $\ell$} \}
\end{equation*}
Then, for $B$ we have
\begin{equation*}
B = \{q_0 + q_1\bfi + q_2\bfj + q_3\bfk \mid q_\ell \in \Q \}
\end{equation*}
It is well known that the minimal polynomial of the element $q = q_0 + q_1\bfi + q_2\bfj + q_3\bfk \in B \setminus \Q$ is $\mu_q(X) = X^2 - 2q_0X + (q_0^2 + q_1^2 + q_2^2 + q_3^2)$, so $A'$ is the set
\begin{equation*}
A' = \{q_0 + q_1\bfi + q_2\bfj + q_3\bfk \in B \mid 2q_0, \; q_0^2 + q_1^2 + q_2^2 + q_3^2 \in \Z\}
\end{equation*} 

As with the previous example, by (\ref{Diagram}), it is enough to prove that $\Omega_{A'} = \Omega_A$. 

Let $q = q_0 + q_1\bfi + q_2\bfj + q_3\bfk \in A'$ and $N = q_0^2 + q_1^2 + q_2^2 + q_3^2 \in \Z$. Then, $2q_0 \in \Z$, so $q_0$ is either an integer or a half-integer. If $q_0 \in \Z$, then $q_1^2 + q_2^2 + q_3^2 = N - q_0^2 \in \Z$. It is known (see for instance \cite[Lem.\ B p.\ 46]{Serre}) that an integer which is the sum of three rational squares is a sum of three integer squares. Thus, there exist $a_1, a_2, a_3 \in \Z$ such that $a_1^2 + a_2^2 + a_3^2 = N - q_0^2$. Then, $q' = q_0 + a_1 \bfi + a_2 \bfj + a_3 \bfk$ is an element of $A$ such that $\Omega_{q'} = \Omega_{q}$.

If $q_0$ is a half-integer, then $q_0 = \tfrac{t}{2}$ for some odd $t \in \Z$. In this case, $q_1^2 + q_2^2 + q_3^2 = \frac{4N - t^2}{4} = \frac{u}{4}$, where $u \equiv 3$ mod 4. Clearing denominators, we get $(2q_1)^2 + (2q_2)^2 + (2q_3)^2 = u$. As before, there exist integers $a_1, a_2$, and $a_3$ such that $a_1^2 + a_2^2 + a_3^2 = u$. But since $u \equiv 3$ mod 4, each of the $a_\ell$ must be odd. So, $q' = (t + a_1 \bfi + a_2 \bfj + a_3 \bfk)/2 \in A$ is such that $\Omega_{q'} = \Omega_{q}$.

It follows that $\Omega_{A'} = \Omega_A$ and thus that $\Int_K(A, A') = \Int_K(A')$.
\end{Ex}

\begin{Ex}\label{Lipschitz quats}
In contrast to the last example, the Lipschitz quaternions $A_1 = \Z \oplus \Z \bfi \oplus \Z \bfj \oplus \Z \bfk$ (where we only allow $a_\ell \in \Z$) are not polynomially dense in $A_1'$. With $A$ as in Example \ref{Hurwitz quats}, we have $A_1\subset A$, and both rings have the same $B$, so $A_1'=A'$. Our proof is identical to Example \ref{Z[sqrt-3]}. Working mod 2, the only possible minimal polynomials for elements of $A_1 \setminus \Z$ are $X^2$ and $X^2+1$. It follows that $f(X) = \frac{x^2(x^2+1)}{2} \in \Int_K(A_1)$. Let $\alpha = \frac{1 + \bfi + \bfj + \bfk}{2} \in A'$. Then, the minimal polynomial of $\alpha$ is $X^2 - X + 1$ (note that this minimal polynomial is shared by $\theta = \frac{1 + \sqrt{-3}}{2}$ in Example \ref{Z[sqrt-3]}). Just as in Example \ref{Z[sqrt-3]}, $f(\alpha) = -\frac{1}{2}$, which is not in $A'$. Thus, $\Int_K(A_1) \not\subseteq \Int(A')$, so $A_1$ is not polynomially dense in $A_1'=A'$ by Proposition \ref{CC IV.4.9}.
\end{Ex}

\section{Further questions}
Here, we list more questions for further investigation.

\begin{Ques}
Under what conditions do we have equalities in (\ref{Diagram})? In particular, what are necessary and sufficient conditions on $A$ for $A$ to be polynomially dense in $A'$? In Examples \ref{Triangular mats} and \ref{Hurwitz quats}, we exploited the fact that if $\Omega_A = \Omega_{A'}$, then $\Int_K(A, A') = \Int_K(A')$. It is natural to ask whether the converse holds.
If $\Int_K(A, A') = \Int_K(A')$, does it follow that $\Omega_A = \Omega_{A'}$? In other words, if $A$ is polynomially dense in $A'$, then is 
$\Omega_A$ equal to $\Omega_{A'}$?
\end{Ques}

\begin{Ques}
By \cite[Proposition IV.4.1]{CaCh} it follows that $\Int(D)$ is integrally closed if and only if $D$ is integrally closed. By Theorem \ref{Int_K(A, A') is integrally closed} we know that if $A=A'$, then $\Int_K(A)$ is integrally closed. Do we have a converse? Namely, if $\Int_K(A)$ is integrally closed, can we deduce that $A=A'$?
\end{Ques}

\begin{Ques}
In our proof (Theorem \ref{Integral closure of Int(A)}) that $\Int_K(A, A')$ is the integral closure of $\Int_K(A)$, we needed the assumption that $D$ has finite residue rings. Is the theorem true without this assumption? In particular, is it true whenever $D$ is Noetherian? 
\end{Ques}

\begin{Ques}
When is $\Int_K(A, A') = \Int_K(\Omega_A, \Ln)$ a Pr\"ufer domain? When $D = \Z$, $\Int_\Q(A, A')$ is always Pr\"ufer by \cite[Cor.\ 4.7]{LopWer}. 
On the other hand, even when $A = D$ is a Pr\"{u}fer domain, $\Int(D)$ need not be Pr\"{u}fer (see \cite[Sec.\ IV.4]{CaCh}).
\end{Ques}

\begin{Ques}
In Remark \ref{Pullback remark}, we proved that $\Int_K(M_n(D))$ equals an intersection of pullbacks:
\begin{equation*}
\bigcap_{M \in M_n(D)} (D[X] + \mu_M(X) \cdot K[X]) = \Int_K(M_n(D))
\end{equation*}
Does such an equality hold for other algebras?
\end{Ques}

\subsection*{Acknowledgement}
The authors wish to thank the referee for his/her suggestions. 
The first author wishes to thank Daniel Smertnig for useful discussions during the preparation of this paper about integrality in non-commutative settings. The same author was supported by the Austrian Science Foundation (FWF), Project Number P23245-N18. 


\end{document}